\documentclass[10pt,preprint]{amsart}
\usepackage{amssymb,amsfonts,amsthm,amsmath,amscd,relsize}
\usepackage{hyperref}
\usepackage{comment}

\usepackage{pgf,tikz,pgfplots}
\usepackage{xcolor}
\usetikzlibrary{calc}

\def\N{\mathbb N}

\theoremstyle{plain}
\newtheorem*{theorem}{Theorem}

\theoremstyle{definition}

\theoremstyle{remark}
\newtheorem{remark}{Remark}
\newtheorem*{thks}{Thanks}

\textheight 195mm \textwidth 125mm

\begin{document}

\title{Less than Equable Triangles on the Eisenstein lattice}

\author{Christian Aebi and Grant Cairns}

\address{Coll\`ege Calvin, Geneva, Switzerland 1211}
\email{christian.aebi@edu.ge.ch}
\address{Department of Mathematical and Physical Sciences, La Trobe University, Melbourne, Australia 3086}
\email{G.Cairns@latrobe.edu.au}


\maketitle

\section{Introduction} 
In this paper, we are interested in  triangles that are \emph{perimeter-dominant}, i.e.,  their perimeter exceeds their area. 
Recall that a \emph{Heron triangle} is a triangle with integer sides and integer area.  By Dolan's Theorem \cite{do}, 
the classification of perimeter-dominant Heronian triangles contains three exceptional examples and four  infinite families determined by certain Pell, or Pell-like, equations.

We will obtain an analogous result for triangles with side lengths in $\sqrt3\N$ and area in  $\frac{\sqrt3}4\N$. As we will consider in Section 3 of this paper, such triangles can be considered to be the equivalent of Heron triangles on the Eisenstein lattice.

\section{The Main Theorem} 

\begin{theorem}\label{T:main}
Perimeter-dominant triangles with side lengths in  $\sqrt3\N$ and area in  $\frac{\sqrt3}4\N$ consist of the equilateral 
triangle of side length $3\sqrt3$ and three infinite families of triangles with side lengths:
\begin{enumerate}
\item[\textup{(a)}] $x\sqrt3,x\sqrt3,\sqrt3$, where  $1=4x^2-3y^2$ for some $x,y\in\N$.
\item[\textup{(b)}] $x\sqrt3,x\sqrt3,2\sqrt3$, where  $1= x^2-3y^2$ for some $x,y\in\N$.
\item[\textup{(c)}]  $(3x+1)\sqrt3,(3x-1)\sqrt3,3\sqrt3$, where  $1=4x^2-15y^2$ for some $x,y\in\N$. 
\end{enumerate}
\end{theorem}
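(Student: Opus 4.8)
The plan is to reduce the two defining conditions to elementary number theory. Writing the side lengths as $a\sqrt3,b\sqrt3,c\sqrt3$ with $a,b,c\in\N$, Heron's formula gives the area as $\frac34\sqrt N$, where $N=(a+b+c)(-a+b+c)(a-b+c)(a+b-c)$ is a positive integer. Since $\frac34\sqrt N=\frac{\sqrt3}4k$ forces $3N=k^2$, the area lies in $\frac{\sqrt3}4\N$ precisely when $3N$ is a perfect square. I would then substitute $u=-a+b+c$, $v=a-b+c$, $w=a+b-c$, which are positive integers all of the same parity and satisfy $u+v=2c$, $u+w=2b$, $v+w=2a$ and $N=(u+v+w)uvw$. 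Ordering $u\le v\le w$ (equivalently $c\le b\le a$), the problem becomes: classify equal-parity triples $u\le v\le w$ of positive integers for which (i) $3(u+v+w)uvw$ is a perfect square, and (ii) the triangle is perimeter-dominant.

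Next I would translate (ii). Squaring $(u+v+w)\sqrt3>\frac34\sqrt{(u+v+w)uvw}$ gives $uvw<\frac{16}3(u+v+w)$, and since $u+v+w\le 3w$ this forces $uv<16$. This is the crucial finiteness input: it leaves only finitely many pairs $(u,v)$, split by parity, namely the odd pairs $(1,1),(1,3),(1,5),\dots,(1,15),(3,3),(3,5)$ and the even pairs $(2,2),(2,4),(2,6)$, with $w\ge v$ a free variable in each.

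For each admissible $(u,v)$ I would study (i) as a Diophantine condition on $w$. Setting $d=u+v$ and reducing the constant $3uv$ to its squarefree part, the requirement that $3uv\,w(w+d)$ be a square becomes, after analysing $\gcd(w,w+d)$, either impossible or equivalent to a Pell-type equation. Concretely, $(1,1)$ gives $3w(w+2)=\square$, which with $w=2x-1$ becomes $4x^2-3y^2=1$, i.e. family (a); $(2,2)$ reduces to $3(x^2-1)$ being a square, i.e. $x^2-3y^2=1$, family (b); and $(1,5)$ forces $3\mid w$, whence $w=6x-3$ and $4x^2-15y^2=1$, family (c). For each of these three pairs $uv\le5<\frac{16}3$, so the coefficient of $w$ in $uvw<\frac{16}3(u+v+w)$ is dominated and the inequality holds for every $w$; each therefore yields an infinite family. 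The pair $(3,3)$ instead reduces to $p^2-3q^2=-2$, but here $uv=9$ and the inequality forces $w<96/11$, leaving only $w=3$: the equilateral triangle of side $3\sqrt3$.

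It remains to kill the other pairs. The pair $(1,3)$ has $uv=3<\frac{16}3$ yet no solution, since $w$ and $w+4$ are coprime, so $w(w+4)=\square$ would make each a square, and $q^2-p^2=4$ has no positive solution. Every remaining pair satisfies $uv\ge6>\frac{16}3$, so $uvw<\frac{16}3(u+v+w)$ now gives an explicit upper bound on $w$, leaving a short finite list (sometimes empty, as for $(3,5)$ where $w\ge5$ already contradicts the bound); a direct check shows $3uv\,w(w+d)$ is never a perfect square there. Finally I would record the bijection between each surviving $(u,v,w)$ and the advertised side lengths, and verify that the three families are distinct and disjoint from the equilateral example. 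I expect the main obstacle to be the bookkeeping in these last two steps: reducing (i) cleanly to the stated Pell equations — getting the divisibility by $3$, respectively $15$, exactly right — and ensuring no ``dead'' pair smuggles in a stray solution, which is where an organised use of $\gcd(w,w+d)$ together with the perimeter bound is essential.
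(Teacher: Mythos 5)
Your proposal follows essentially the same route as the paper: the same substitution $u=-a+b+c$, $v=a-b+c$, $w=a+b-c$, the same perimeter-dominance inequality $3uvw<16(u+v+w)$ as the finiteness input, the same case analysis over pairs $(u,v)$ with bounded product, and the same Pell reductions yielding families (a), (b), (c) and the equilateral triangle. The differences are organizational rather than structural: your coarser bound $uv<16$ (the paper instead writes the inequality as $\frac1{uv}+\frac1{vw}+\frac1{wu}>\frac3{16}$, getting $u\le3$ and then a sharper bound on $v$ for each $u$) admits three extra pairs $(1,13),(1,15),(3,5)$, which you correctly dispose of as vacuous; you kill $(1,3)$ by coprimality of $w$ and $w+4$ and the impossibility of $q^2-p^2=4$, where the paper uses a mod $8$ argument; and you explicitly record the converse (each Pell solution is perimeter-dominant because $3uv<16$), which the paper leaves implicit.

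One slip in the $(3,3)$ case needs repair. The square condition $27w(w+6)=n^2$ does not reduce to $p^2-3q^2=-2$: one first shows $3\mid w$ is forced (else $v_3(n^2)=3$ is odd), and writing $w=3w'$ the condition becomes the Pell equation $(w'+1)^2-3l^2=1$, which has infinitely many solutions (e.g.\ $w=75$ gives $27\cdot75\cdot81=405^2$). So squareness alone does not isolate $w=3$; and conversely the inequality $w<96/11$ alone leaves $w\in\{3,5,7\}$, not only $w=3$. You need both: the inequality restricts to $w\in\{3,5,7\}$, and then $27\cdot5\cdot11=1485$ and $27\cdot7\cdot13=2457$ are checked not to be squares. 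This is exactly the finite check recorded in the paper's table (``square only for $w=3$''), so the repair is immediate, but as written your sentence conflates the two steps and cites a wrong Pell equation.
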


\begin{proof}
Let a perimeter-dominant triangle $T$ have side lengths $a\sqrt3, b\sqrt3, c\sqrt3$ and area $\Delta:=\frac{n\sqrt3}4$, where $a,b,c,n\in\N$. 
Heron's formula 
 \cite[Chap.~6.7]{OW} for the area gives
\[
\Delta=\sqrt{s(s-a\sqrt3)(s-b\sqrt3)(s-c\sqrt3)},
\]
where $s=\frac{\sqrt3(a+b+c)}2$ is the semi-perimeter.
Thus, as $T$ is perimeter-dominant,
\[
\frac{n\sqrt3}4=\frac34\sqrt{(a+b+c)(-a+b+c)(a-b+c)(a+b-c)}< \sqrt3(a+b+c),
\]
so
\begin{equation}\label{E:heron}
n^2=3(a+b+c)(-a+b+c)(a-b+c)(a+b-c)<16(a+b+c)^2.
\end{equation}
Let $p=a+b+c, u=-a+b+c, v=a-b+c, w=a+b-c$. Note that the integers $p,u,v,w$ are all positive and have the same parity. (However, unlike the Heron case,  they may be even or odd). 
Without loss of generality, suppose $u\le v\le w$.
Note that $u+v+w=p$, so by \eqref{E:heron}, $3uvw<16(u+v+w)$ and hence
\begin{equation}\label{E:xyz}
\frac1{uv}+\frac1{vw}+\frac1{wu}>\frac3{16}.
\end{equation}
Since $u\le v\le w$, \eqref{E:xyz} gives $\frac{3}{u^2}>\frac3{16}$, and hence $u\le 3$.
For each of these three values of $u$, we can use \eqref{E:xyz} again to get an upper bound on $v$. Then, using the fact that $u,v$ have the same parity, for each possible pair $u,v$ we can use \eqref{E:xyz} again, in some cases obtaining an upper bound on $w$.
Table~\ref{T:poss} gives the potential  values of $u,v,w$ and the corresponding values of $n^2=3(u+v+w)uvw$. Certain potential 
values of $u,v,w$ are eliminated because the resulting values of $n^2$ cannot be squares.

\begin{table}[h!]
\centering
\begin{tabular}{c|c|c|c|c}
  \hline
   $u$ & $v$ & $w$  & $n^2$   & Comment\\\hline
  $1$& $1$ & $w$ odd  & $3w(w+2)$   &\\
  $1$& $3$ & $w$ odd  & $9w(w+4)$   &Not square mod $8$\\
  $1$& $5$ & $w$ odd  & $15w(w+6)$   &\\
  $1$& $7$ & $w$ odd, $7\le w\le 25$  & $21w(w+8)$   &Not square mod $8$\\
  $1$& $9$ & $w$ odd, $9\le w\le 13$  & $27w(w+10)$  &Not square\\
  $1$& $11$ & $11$  & $33\cdot11\cdot23$   &Not square\\
  $2$& $2$ & $w$ even  & $12w(w+4)$   &\\
  $2$& $4$ & $w$ even, $4\le w\le 10$  & $24w(w+6)$   &Not square\\
  $2$& $6$ & $6$   & $6^3\cdot 14$   &Not square \\
  $3$& $3$ & $w$ odd, $3\le w\le 7$  & $27w(w+6)$   & Square only for $w=3$\\
\hline
\end{tabular}

\bigskip
\caption{Possible  values.}\label{T:poss}
\end{table}

The side lengths are given by $a= \frac{v + w}2, b = \frac{u + w}2, c = \frac{u + v}2$. The case $u=v=w=3$ gives $a=b=c=3$, which is the equilateral triangle. The remaining three cases from Table~\ref{T:poss} are $(u,v)=(1,1),(1,5)$ and $(2,2)$.

If $u=v=1$,  then $w$ is odd and $n^2=3w(w+2)$. So $n$ is divisible by $3$. Setting  $n=3y$ and $w= 2x-1$ gives $1=4x^2-3y^2$; this is case (a) of the Theorem. Similarly, if $u=1,v=5$, we have $n^2=15w(w+6)$ and setting  $n=45y$ and $w= 6x-3$ gives $1=4x^2-15y^2$; this is case (c).
Finally, if $u=v=2$,  we have $n^2=12w(w+4)$ and setting  $n=12y$ and $w=2x-2$  gives $1=x^2-3y^2$, which is case (b).
\end{proof}

\begin{remark} 
Table \ref{T} gives the perimeters and areas of the triangles of the Theorem. In cases (a) and (c) of the Theorem, $y$ is necessarily odd. So, from Table~\ref{T}, it is only in case (b) that the area is an integer multiple of $\sqrt3$. 

\begin{table}[h]
\begin{tabular}{c|l|c|c|c}
  \hline
Case&  Relation& Side Lengths/$\sqrt3$ &  Perimeter/$\sqrt3$&Area/$\sqrt3$  \\\hline
 & &$(3,3,3)$ &  $9$& ${27}/4$\\
a &$1=4x^2-3y^2$&   $(x,x,1)$ & $2x+1$&${3y}/4 $\\
b &$1= x^2-3y^2$ & $(x,x,2)$& $2x+2$&$3y$\\
c &  $1=4x^2-15y^2$ &$(3x+1,3x-1,3)$ & $6x+3$&${45y}/4$\\
\hline
\end{tabular}
\bigskip
\caption{Perimeters and areas}\label{T}
\end{table}
\end{remark}

\begin{remark} The three equations of the Theorem are well known. For the equation $1=4x^2-3y^2$ of case (a), the solutions $(x,y)$ are given by the solutions of the Pell equation $ 1=X^2 - 3y^2 $ for which $X$ is even, $X=2x$; see sequence A094347 in \cite{OEIS}. The $x$-values are given by the recurrence relation $x_n = 14x_{n-1} - x_{n-2}$ with $x_1 = 1,x_2 = 13$. The first five $x$-values are $1,13,181,2521,35113$.

For the Pell equation $1=x^2-3y^2$ of case (b), the  $x$-values are given by the recurrence relation $x_n = 4x_{n-1} - x_{n-2}$ with $x_1 = 2,x_2 = 7$; see sequence A001075 in \cite{OEIS}. The first five $x$-values are $2,7,26,97,362$.

For the equation $1=4x^2-15y^2$ of case (c), the solutions $(x,y)$ are given by the solutions of the Pell equation $1= X^2 - 15y^2 = 1$ for which $X$ is even, $X=2x$. The  $x$-values are given by the recurrence relation $x_n = 62x_{n-1} - x_{n-2}$ with $x_1 = 2,x_2 = 122$; for the sequence $\frac12x_n$, see sequence A302329 in \cite{OEIS}.
The first five $x$-values are $2,122,7562,468722,29053202$.
\end{remark}

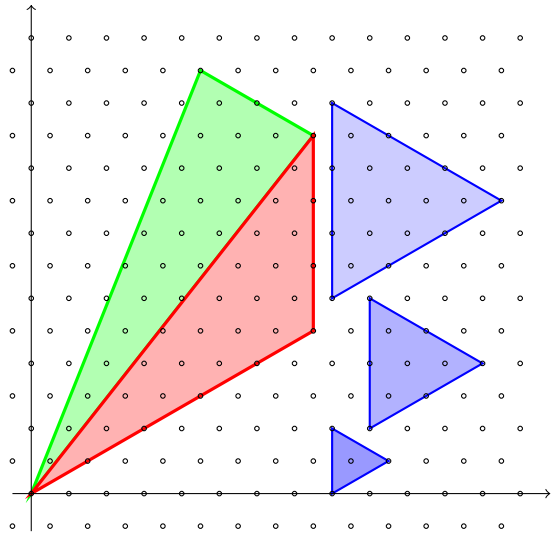
\begin{figure}[h!]
\begin{tikzpicture}[scale=.5]
\def\r{1.732};

\draw [fill, green!30] (0,0) 
  -- (2+11/2, 11*\r/2)  
  -- (-2+13/2, 13*\r/2) 
  -- cycle;

\draw [very thick, green] (0,0) 
  -- (2+11/2, 11*\r/2)  
  -- (-2+13/2, 13*\r/2)  
  -- cycle;

\draw [fill, red!30] (0,0) 
  -- (2+11/2, 11*\r/2)  
  -- (5+5/2,5*\r/2) 
  -- cycle;

\draw [very thick, red] (0,0) 
  -- (2+11/2, 11*\r/2)  
  -- (5+5/2,5*\r/2)  
  -- cycle;

\draw [fill, blue!20] ($(8,0) +(0,3*\r)$)
  -- ($(8+3+3/2, 3*\r/2) +(0,3*\r)$)
  -- ($(8,3*\r)+(0,3*\r)$)
  -- cycle;

\draw [thick, blue] ($(8,0) +(0,3*\r)$)
  -- ($(8+3+3/2, 3*\r/2) +(0,3*\r)$)
  -- ($(8,3*\r)+(0,3*\r)$)
  -- cycle;

\draw [fill, blue!30] ($(9,0) +(0,1*\r)$) 
  -- ($(9+2+2/2, 2*\r/2) +(0,1*\r)$) 
  -- ($(9,2*\r)+(0,1*\r)$) 
  -- cycle;

\draw [thick, blue] ($(9,0) +(0,1*\r)$) 
  -- ($(9+2+2/2, 2*\r/2) +(0,1*\r)$) 
  -- ($(9,2*\r)+(0,1*\r)$) 
  -- cycle;

\draw [fill, blue!40] (8,0) 
  -- (8+1+1/2, 1*\r/2) 
  -- (8,1*\r)
  -- cycle;

\draw [thick, blue] (8,0) 
  -- (8+1+1/2, 1*\r/2) 
  -- (8,1*\r)
  -- cycle;

\foreach \i in {0,...,13}
\foreach \j in {0,...,7}
\draw (\i,\j*\r) circle (.06);
\foreach \i in {0,...,13}
\foreach \j in {-1,...,6}
\draw (\i-1/2,\r*\j+\r/2) circle (.06);

 \draw [->] (0,-1) -- (0,13);
 \draw [->] (-1/2,0) -- (13.8,0);

  \end{tikzpicture}
\caption{Five triangles from the Theorem}\label{F}
\end{figure}

\section{The Eisenstein Lattice} 

Heron triangles can all be realized on the integer lattice \cite{Yiu}, i.e., the lattice of points in the plane with integer coordinates. This is not the case for the triangles studied in this note, since triangles on the integer lattice have integer or half-integer area. However, they can be realised on the 
 \emph{Eisenstein lattice}. The Eisenstein lattice, also known as the triangle lattice or the hexagonal lattice, is the set of vertices of the usual  tiling of the plane by equilateral triangles of side length $1$. Using complex numbers, the Eisenstein lattice is the additive subgroup of $\mathbb C$  generated by the elements $1$ and $\omega=-\frac{1}2+i\frac{\sqrt3}2$. It is shown in \cite{AC2} that all triangles with side lengths in  $\sqrt3\N$ and area in  $\frac{\sqrt3}4\N$ can be drawn on the Eisenstein lattice. Figure~\ref{F} shows five of the triangles of the Theorem. These are the three equilateral triangles (triangle of side length $3\sqrt3$, case (a) with $x=1$ and case (b) with $x=2$), the isosceles triangle of case (b) with $x=7$, and the triangle with sides $(3\sqrt3,5\sqrt3,7\sqrt3)$ of case (c) with $x=2$.
The results in the present note are an extension of those of \cite{AC}, in which we studied triangles whose vertices lie on the Eisenstein lattice and which are \emph{equable}, i.e.,  they have equal perimeter and area.


\begin{thks} The authors are very grateful to the referee whose thoughtful suggestions significantly improved the presentation of the paper and simplified the proof.
\end{thks}


\bibliographystyle{amsplain}
{}

\end{document}